\numberwithin{equation}{section}
\newtheorem{Theorem}{Theorem}[section]
\newtheorem{Corollary}[Theorem]{Corollary}
\newtheorem{Lemma}[Theorem]{Lemma}
\newtheorem{Proposition}[Theorem]{Proposition}
\newtheorem{Remark}{Remark}[section]
\begin{document}

\title
[Blow-up for self-interacting fractional GL eq]
{Blow-up for self-interacting fractional Ginzburg-Landau equation}

\author[K. Fujiwara]{Kazumasa Fujiwara}

\address{%
Department of Pure and Applied Physics \\ Waseda University \\
3-4-1, Okubo, Shinjuku-ku, Tokyo 169-8555 \\ Japan}

\email{k-fujiwara@asagi.waseda.jp}

\thanks{
The first author was partly supported by the Japan Society for the Promotion of Science,
Grant-in-Aid for JSPS Fellows no 16J30008
and Top Global University Project of Waseda University.
}

\author[V. Georgiev]{Vladimir Georgiev}

\address{%
Department of Mathematics \\
University of Pisa \\
Largo Bruno Pontecorvo 5
I - 56127 Pisa
\\ Italy  \\ and \\
 Faculty of Science and Engineering \\ Waseda University \\
 3-4-1, Okubo, Shinjuku-ku, Tokyo 169-8555 \\
Japan}

\email{georgiev@dm.unipi.it}

\thanks{ The second author was supported in part by  INDAM, GNAMPA - Gruppo Nazionale per l'Analisi Matematica, la Probabilita e le loro Applicazioni, by Institute of Mathematics and Informatics, Bulgarian Academy of Sciences and Top Global University Project, Waseda University.}

\author[T. Ozawa]{Tohru Ozawa}

\address{%
Department of Applied Physics \\ Waseda University \\
3-4-1, Okubo, Shinjuku-ku, Tokyo 169-8555 \\ Japan}

\email{txozawa@waseda.jp}
\thanks{The third author was supported by
Grant-in-Aid for Scientific Research (A) Number 26247014.}

\begin{abstract}
The blow-up of solutions for the Cauchy problem of fractional Ginzburg-Landau equation
with non-positive nonlinearity is shown by an ODE argument.
Moreover, in one dimensional case,
the optimal lifespan estimate for size of initial data is obtained.
\end{abstract}

\maketitle

\renewcommand{\thefootnote}{\fnsymbol{footnote}}
\footnotetext{\emph{Key words:} fractional Ginzburg-Landau equation, blow-up}
\footnotetext{\emph{AMS Subject Classifications:} 35Q40, 35Q55}

\setcounter{page}{1}

\section{Introduction}
The classical complex Ginzburg-Landau  (CGL) equation takes the form
	\begin{equation}
	\partial_t \psi =-(\alpha+\mathrm i \beta) \Delta \psi + F(\psi, \overline{\psi}),
	\label{eq:1.1}
	\end{equation}
where $\alpha, \beta$ are real parameters.
The standard CGL equation has a self-interaction term $F$ of the form
	\[
	F(\psi,\overline{\psi})
	= - \sum_{j=1}^K (\alpha_j + \mathrm i \gamma_j) \psi |\psi|^{p_j-1},
	\]
where $\alpha_j, \beta_j$ are real parameters.
We refer to \cite{vS} for a review on this subject.
Using the representation $\psi(t,x) = u_1(t,x) + \mathrm i u_2(t,x)$,
where $u_1, u_2$ are real-valued functions,
we see that the equation \eqref{eq:1.1} can be rewritten
in the form of a system of reaction diffusion equations
	\[
	\partial_t U = A \Delta U = F(U),
	\]
where
	\[
	U(t,x) = \left(
	\begin{array}{c}
	u_1(t,x) \\
	u_2(t,x) \\
	\end{array}
	\right), \quad
	A = \left(
	\begin{array}{cc}
	-\alpha & \beta \\
	-\beta & -\alpha \\
	\end{array}
	\right).
	\]
The limiting case $\alpha \to 0, \alpha_j \to 0$
leads to the nonlinear Schr\"odinger equation (NLS)
	\begin{equation}\label{eq:1.2}
	\partial_t \psi
	=-\mathrm i \beta  \Delta \psi - \sum_{j=1}^N \mathrm i \gamma_j  \psi |\psi|^{p_j-1}.
	\end{equation}
The oscillation synchronization of phenomena modeled by Kuramoto equations
(see \cite{Kura1}) lead to a system of ODE having a similar qualitative behavior
	\begin{equation}\label{eq:1.3}
	\partial_t \psi_k =-\mathrm i H_k \psi_k + F_k(\Psi, \overline{\Psi}),\ \ k=1, \cdots, N.
	\end{equation}
The nonlinear terms $F_k$ in the system obey the property
	\[
	{\rm Im} \ \left( F_k(\Psi, \overline{\Psi}) \ \ \overline{\psi_k}\right) =0,
	\ k=1, \cdots, N.
	\]
This system simulates the behavior of $N$ oscillators,
so that $\Psi=(\psi_1,\cdots, \psi_N), $ with $\psi_j$ being complex-valued functions.
The nonlinearities in \eqref{eq:1.3} are chosen
so that the evolution flow associated to the Kuramoto system leaves the manifold
	\[
	\mathcal{M}
	= \underbrace{\mathbb{S}^1 \times \cdots \times \mathbb{S}^1}_{N \ \mbox{times}},
	\]
invariant.

The derivation of the Kuramoto system in \cite{Kura1}
is based on complex Landau-Ginzburg equation
(see equation (2.4.15) in \cite{Kura1})
	\[
	\partial_t \Psi
	= \mathrm i \mathcal{H}\Psi - (\alpha+\mathrm i \beta) \Delta \Psi
	- (\alpha_1+\mathrm i \beta_1) \Psi |\Psi|^2,
	\]
where $ \Psi=(\psi_1,\cdots,\psi_N)^t$,
$\mathcal{H}$ is a diagonal matrix with real entries.
If $\beta$ and $\beta_1$ become very large,
then we have an equation very close to Schr\"odinger self-interacting system \eqref{eq:1.2}.
As it was pointed out (p. 20, \cite{Kura1}),
a chemical turbulence of a diffusion-induced type
are possible only for regions intermediate between the two extreme cases,
where $\beta$ and $\beta_1$ are very small or very large.

Turning back to CGL equation and comparing \eqref{eq:1.1} with Kuramoto system,
we see that it is natural to take $\alpha \to 0,$ $\beta_j \to 0$
so that we have the following simplified CGL equation
	\[
	\partial_t \psi =-\mathrm i \beta \Delta \psi - \alpha_1 \psi |\psi|^{p-1}.
	\]
A similar system was discussed in \cite{ACGM} with nonlinearity typical for the Kuramoto system.

The fractional dynamics seems more adapted to synchronization models
due to the considerations in \cite{TZ},
therefore we can consider the following fractional Ginzburg-Landau equations
	\[
	\partial_t \psi =-\mathrm i \sqrt{- \Delta} \psi \pm  \psi |\psi|^{p-1}.
	\]
The study of the attractive case
	\[
	\partial_t \psi =-\mathrm i |D| \psi - \psi |\psi|^{p-1},\quad
	|D|=\sqrt{- \Delta},
	\]
is initiated in \cite{FGO1},
where the well-posedness is established for the cases $1 \leq n \leq 3.$

In this article, we study the repulsive case
\begin{align}
	\begin{cases}
	 \partial_t u = -\mathrm i |D| u +
|u|^{p-1} u,
	& t \in \lbrack 0, T), \quad x \in \mathbb R^n,\\
	u(0,x) = u_0(x),
	& x \in \mathbb R^n,
	\end{cases}
	\label{eq:1.4}
	 \end{align}
where  $n \geq 1$, and $p > 1$.
Our main goal is to obtain a blow-up result under the assumption
that initial data are in $H^s(\mathbb R^n)$ with $s > n/2$,
where $H^s(\mathbb R^n)$ is the usual Sobolev space
defined by $(1-\Delta)^{-s/2} L^2(\mathbb R^n)$.

We denote $\langle x \rangle = (1+|x|^2)^{1/2}$.
We abbreviate $L^q(\mathbb R^n)$ to $L^q$ and
$\| \cdot \|_{L^q(\mathbb R^n)}$ to $\| \cdot \|_q$ for any $q$.
We also denote by $\|T\|$ the operator norm of bounded operator $T:L^2 \to L^2$.

The following statements are the main results of this article.

\begin{Proposition}
\label{Proposition:1.1}
Let $h$ be a Lipschitz function satisfying $\frac 1 h \in L^\infty$ and
	\begin{align}
	\bigg\| \frac{1}{h(\cdot)}
	\int_{\mathbb R^n} \langle \cdot - y \rangle^{-n-1} h(y) f(y) dy \bigg\|_2
	\leq C \|f\|_2.
	\label{eq:1.5}
	\end{align}
Let $u_0 \in h L^2$ satisfy
	\begin{align}
	\| \frac{1}{h} u_0 \|_{2}
	\geq \|\frac{1}{h} [D,h]\|^{\frac 1 {p-1}} \| \frac{1}{h} \|_{2}.
	\label{eq:1.6}
	\end{align}
If there is a solution $u \in C([0,T);h L^2)$ for \eqref{eq:1.4}, then
	\begin{align}
	&\| \frac{1}{h} u(t) \|_{2}
	\label{eq:1.7}\\
	&\geq e^{- 2 \|\frac{1}{h} [D,h]\| t}
	\Big( \| \frac{1}{h} u_0 \|_{2}^{-p+1}
	+ \|\frac{1}{h} [D,h]\|^{-1} \| \frac 1 h \|_{2}^{-p+1}
	\big\{ e^{- \|\frac{1}{h} [D,h]\| (p-1) t} - 1 \big\}
	\Big)^{-\frac{1}{p-1}}.
	\nonumber
	\end{align}
Therefore, the lifespan is estimated by
	\begin{align}
	T \leq - \frac 2 {p-1} \|\frac{1}{h} [D,h]\|^{-1}
	\log \bigg( 1 - \|\frac{1}{h} [D,h]\| \| \frac 1 h \|_{2}^{p-1}
	\| \frac{1}{h} u_0 \|_{2}^{-p+1} \bigg).
	\label{eq:1.8}
	\end{align}
\end{Proposition}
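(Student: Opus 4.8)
The plan is to reduce \eqref{eq:1.4} to a single scalar differential inequality for the weighted norm $M(t) = \|\frac 1 h u(t)\|_2$ and then to integrate it. Setting $v = \frac 1 h u$ and differentiating $M^2 = \|v\|_2^2$ in time, I would insert $\partial_t u = -\mathrm i |D| u + |u|^{p-1} u$ and commute $|D|$ through the weight by means of the identity $\frac 1 h |D|(hv) = |D| v + \frac 1 h [D,h] v$. This splits $\frac{d}{dt} M^2 = 2\,\mathrm{Re}\langle \partial_t v, v\rangle$ into three contributions coming from the skew-adjoint term $-\mathrm i |D| v$, the commutator term $-\mathrm i \frac 1 h [D,h] v$, and the nonlinearity.

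First I would dispose of the linear part. Since $|D|$ is self-adjoint, $\langle |D| v, v\rangle$ is real, so $\mathrm{Re}\langle -\mathrm i |D| v, v\rangle = 0$ and the leading dispersive term drops out entirely. For the commutator term I would apply Cauchy--Schwarz together with the $L^2$-boundedness of $\frac 1 h [D,h]$, which is exactly what hypothesis \eqref{eq:1.5} (with the Lipschitz assumption and $\frac 1 h \in L^\infty$) is designed to supply, obtaining $\mathrm{Re}\langle -\mathrm i \frac 1 h [D,h] v, v\rangle \geq -\|\frac 1 h [D,h]\|\, M^2$.

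The decisive positive contribution is the nonlinear term. A direct computation gives $\mathrm{Re}\langle \frac 1 h |u|^{p-1} u, \frac 1 h u\rangle = \int |h|^{p-1} |v|^{p+1}\, dx \geq 0$. To convert this into a usable lower bound I would use a weighted H\"older inequality, writing $|v|^2 = |h|^{-2(p-1)/(p+1)} \cdot (|h|^{2(p-1)/(p+1)} |v|^2)$ and pairing the conjugate exponents $\frac{p+1}{p-1}$ and $\frac{p+1}{2}$; this yields $M^2 \leq \|\frac 1 h\|_2^{2(p-1)/(p+1)} (\int |h|^{p-1} |v|^{p+1})^{2/(p+1)}$, equivalently $\int |h|^{p-1} |v|^{p+1} \geq \|\frac 1 h\|_2^{-(p-1)} M^{p+1}$. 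Collecting the three estimates produces the Bernoulli-type differential inequality $\frac{d}{dt} M^2 \geq -2\|\frac 1 h [D,h]\|\, M^2 + 2\|\frac 1 h\|_2^{-(p-1)} M^{p+1}$.

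Finally I would integrate. Multiplying by the factor $e^{2\|\frac 1 h [D,h]\| t}$ and setting $P = e^{2\|\frac 1 h [D,h]\| t} M^2$ cancels the linear term and leaves a separable inequality; integrating the resulting bound on $\frac{d}{dt} P^{-(p-1)/2}$ over $[0,t]$ yields \eqref{eq:1.7}. The role of \eqref{eq:1.6} is that it places $M(0)$ at or above the unstable equilibrium $(\|\frac 1 h [D,h]\|\,\|\frac 1 h\|_2^{p-1})^{1/(p-1)}$ of the comparison ODE, which is precisely the threshold forcing the bracket in \eqref{eq:1.7} to decrease to zero in finite time; reading off the vanishing time of that bracket gives \eqref{eq:1.8}. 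I expect the main obstacle to be the two analytic inputs feeding the ODE, namely the $L^2$-boundedness of $\frac 1 h [D,h]$ through \eqref{eq:1.5} and the sharp weighted H\"older lower bound on the nonlinearity, since once these are in hand the remainder is elementary ODE integration. A secondary technical point is justifying the time-differentiation of $\|v(t)\|_2^2$ for a solution only assumed to lie in $C([0,T); h L^2)$, which I would handle using the extra regularity furnished by the $H^s$, $s > n/2$, well-posedness theory.
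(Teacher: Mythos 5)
Your proposal is correct and follows essentially the same route as the paper: substitute $v=u/h$, use that the $|D|v$ term is skew (contributes nothing to $\frac{d}{dt}\|v\|_2^2$), bound the commutator via the $L^2$-boundedness of $\frac1h[D,h]$ furnished by \eqref{eq:1.5}, lower-bound the nonlinear term by the weighted H\"older inequality with exponents $\frac{p+1}{p-1}$ and $\frac{p+1}{2}$, and integrate the resulting Bernoulli inequality (the paper packages this last step as Lemma \ref{Lemma:2.1} with $q=(p+1)/2$). Your closing remark about justifying the differentiation of $\|v(t)\|_2^2$ for $u\in C([0,T);hL^2)$ is a point the paper itself passes over silently.
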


\bigskip

We remark that $\langle \cdot \rangle$ is a typical example of weight functions $h$
for Proposition \ref{Proposition:1.1}.
Proposition \ref{Proposition:1.1} is a blow-up result for a kind of large data of $h L^2$.
However, in a subcritical case where $p < p_F= 1 + 2/n$,
solutions blow up even for small $L^2$ initial data.

\begin{Corollary}
\label{Corollary:1.2}
Let $u_0 \in L^2(\mathbb R^n) \backslash \{0\}$ and $1 < p < p_F$.
Then the corresponding solution in $C([0,T);L^2)$ blows up at a finite positive time.
\end{Corollary}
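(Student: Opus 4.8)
The plan is to deduce the Corollary from Proposition~\ref{Proposition:1.1} by applying it to a one-parameter family of weights $h_\lambda$ obtained by dilating a fixed admissible weight, and by exploiting the fact that in the subcritical range $p<p_F$ the threshold \eqref{eq:1.6} can be met by arbitrarily small $L^2$ data once $\lambda$ is taken large. Concretely, I would fix an exponent $m$ with $n/2<m<(n+1)/2$, set $h(x)=\langle x\rangle^m$ and $h_\lambda(x)=h(x/\lambda)=\langle x/\lambda\rangle^m$ for $\lambda\geq1$. The choice $m>n/2$ guarantees $1/h\in L^2$, so that the right-hand side of \eqref{eq:1.6} is finite, while $1/h_\lambda\in L^\infty$ with $\|1/h_\lambda\|_\infty=1$ and $h_\lambda$ is Lipschitz; these are the structural hypotheses of Proposition~\ref{Proposition:1.1}.

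The heart of the argument is the behaviour under dilation of the three quantities entering \eqref{eq:1.6}. Writing $S_\lambda f=f(\cdot/\lambda)$, one has $S_\lambda=\lambda^{n/2}U_\lambda$ with $U_\lambda$ unitary on $L^2$, and since $|D|$ is homogeneous of degree one, $|D|S_\lambda=\lambda^{-1}S_\lambda|D|$. A direct computation then gives
\begin{equation*}
\tfrac1{h_\lambda}[D,h_\lambda]=\lambda^{-1}S_\lambda\big(\tfrac1h[D,h]\big)S_\lambda^{-1},\qquad\text{hence}\qquad\big\|\tfrac1{h_\lambda}[D,h_\lambda]\big\|=\lambda^{-1}\big\|\tfrac1h[D,h]\big\|.
\end{equation*}
Moreover $\|1/h_\lambda\|_2=\lambda^{n/2}\|1/h\|_2$, while $\|\tfrac1{h_\lambda}u_0\|_2\to\|u_0\|_2$ as $\lambda\to\infty$ by monotone convergence. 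Substituting these into the right-hand side of \eqref{eq:1.6} shows that it behaves like a constant times $\lambda^{\,n/2-1/(p-1)}$, so it tends to $0$ precisely when $n/2-1/(p-1)<0$, that is, when $p-1<2/n$, i.e.\ $p<p_F$. Since the left-hand side of \eqref{eq:1.6} stays bounded away from $0$ (it converges to $\|u_0\|_2>0$), the threshold \eqref{eq:1.6} is satisfied for all sufficiently large $\lambda$. Fixing such a $\lambda$, Proposition~\ref{Proposition:1.1} yields the lower bound \eqref{eq:1.7} and the finite lifespan \eqref{eq:1.8}; because $\|1/h_\lambda\|_\infty=1$ we have $\|u(t)\|_2\geq\|\tfrac1{h_\lambda}u(t)\|_2$, so the $L^2$ norm of the solution must escape to infinity at a finite time, which is the assertion of the Corollary.

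It remains to check that $h_\lambda$ verifies the integral bound \eqref{eq:1.5}; this is the only genuinely technical point, and it is where the exponent window is used. For each $\lambda$ the operator in \eqref{eq:1.5} has the positive kernel $h_\lambda(x)^{-1}\langle x-y\rangle^{-n-1}h_\lambda(y)$, and I would establish its $L^2$-boundedness by a Schur test with the auxiliary weight $\langle x\rangle^{a}$. Using $\langle\cdot\rangle^{-n-1}*\langle\cdot\rangle^{b}\lesssim\langle\cdot\rangle^{b}$, which holds exactly for $b<1$, the two Schur inequalities reduce to $m+a<1$ and $a-m>-n$, i.e.\ $m-n<a<1-m$; this interval is nonempty precisely because $m<(n+1)/2$. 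Thus the real obstacle is the competition between $1/h\in L^2$, which forces $m>n/2$, and the boundedness \eqref{eq:1.5}, which forces $m<(n+1)/2$: the admissible window $n/2<m<(n+1)/2$ is always nonempty, and checking this is the crux of the dimensional analysis. The remaining points are bookkeeping: one must ensure the constant in \eqref{eq:1.5} is finite for each fixed $\lambda$ (it is in fact uniformly bounded for $\lambda\geq1$, since dilation only flattens $h_\lambda$), and one must justify regarding the given $C([0,T);L^2)$ solution as an $h_\lambda L^2$ solution, which is immediate from $L^2\subseteq h_\lambda L^2$ (a consequence of $1/h_\lambda\leq1$), so that Proposition~\ref{Proposition:1.1} applies verbatim.
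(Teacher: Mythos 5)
Your strategy is the same as the paper's: dilate a fixed admissible weight, use $\|\tfrac1{h_\lambda}[D,h_\lambda]\|\lesssim\lambda^{-1}$ (this is exactly Corollary \ref{Corollary:2.5}) together with $\|1/h_\lambda\|_2\sim\lambda^{n/2}$ to see that the right-hand side of \eqref{eq:1.6} scales like $\lambda^{n/2-1/(p-1)}\to0$ precisely when $p<p_F$, while the left-hand side tends to $\|u_0\|_2>0$; the paper does precisely this with $h_R=\langle\cdot/R\rangle$. Your scaling identities, the Schur test for \eqref{eq:1.5} (with the window $m-n<a<1-m$, nonempty iff $m<(n+1)/2$), and the observation $L^2\subseteq h_\lambda L^2$ are all correct.

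The one substantive difference is your base weight $h=\langle\cdot\rangle^m$ with $n/2<m<(n+1)/2$, chosen so that $\|1/h\|_2<\infty$. This is a legitimate concern (for $n\geq2$ the paper's own choice $h=\langle\cdot\rangle$ has $\|1/h\|_2=\infty$, so the right-hand side of \eqref{eq:1.6} is not finite there), but your fix trades one hypothesis for another: for $n\geq2$ the window forces $m>1$, and then $\langle x\rangle^m$ is \emph{not} Lipschitz, since $|\nabla h|\sim m\langle x\rangle^{m-1}$ is unbounded. Hence your assertion that ``$h_\lambda$ is Lipschitz'' fails, and neither Proposition \ref{Proposition:1.1} nor Lemma \ref{Lemma:2.4} (whose high-frequency part rests on the Coifman--Meyer estimate of Lemma \ref{Lemma:2.2}, which requires $\|h\|_{\mathrm{Lip}}<\infty$) applies as stated. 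This tension is structural: any Lipschitz $h$ grows at most linearly, so $1/h\gtrsim\langle\cdot\rangle^{-1}$ and $\|1/h\|_2=\infty$ once $n\geq2$. Consequently your proof is complete for $n=1$ (where $1/2<m<1$ and $h$ is Lipschitz), but for $n\geq2$ you still owe a separate proof that $\tfrac1h[D,h]$ is $L^2$-bounded for the super-linear weight $\langle x\rangle^m$ --- for instance a weighted commutator estimate exploiting the decay $\tfrac1h\nabla h\sim\langle x\rangle^{-1}$ --- which Lemma \ref{Lemma:2.2} does not provide.
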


\begin{Remark}
If we choose $h(x) = \langle x \rangle$,
the statement of our main result guarantees the blow-up of the momentum
	\[
	Q_{-1}(t) = \int_{\mathbb R^n} \langle x \rangle^{-1} |u(t,x)|^2 dx.
	\]
for the solution to the fractional CGL equation
	\[
	\partial_t u = -\mathrm i |D| u + |u|^{p-1} u
	\]
in \eqref{eq:1.4}.
The blow-up mechanism is based on the differential inequality
	\begin{equation}\label{eq:1.9}
	Q^\prime_{-1}(t) \geq  C_0 \left( Q_{-1} (t) \right)^{(p+1)/2}  - C_1 Q_{-1} (t),
	\ \ C_0, C_1 >0.
	\end{equation}
Comparing the fractional CGL equation with the classical NLS
	\[
	\partial_t u = \mathrm i \Delta u + \mathrm i |u|^{p-1} u,
	\]
we see that introducing  the momentum
	\[
	Q_2(t) = \int_{\mathbb R^n} |x|^2 |u(t,x)|^2 dx,
	\]
and using a Virial identity one can show that
	\[
	Q_2^{\prime \prime}(t) \sim E(u)(t),
	\]
where $E(u)(t) = \| D u(t)\|^2_{L^2} - c \|u(t)\|^{p+1}_{L^{p+1}}$
and $c>0$ is an appropriate constant.
Therefore, the blow-up mechanism for NLS is based on the estimate
	\[
	E(u)(t) \leq -\delta, \ \delta >0,
	\]
that implies differential inequality
	\[
	Q_2^{\prime \prime}(t) \leq - \delta
	\]
and the last inequality can not be satisfied for the whole interval $t \in (0,\infty)$
since $Q_2(t)$ is a positive quantity.
\end{Remark}

Moreover, for large $R$,
if $u_0$ is given by $Rf$ with $f \in h L^2$ and $h$ satisfying \eqref{eq:1.5},
then \eqref{eq:1.8} means $T \leq C R^{-p+1}$.
In one dimensional case,
this upper bound is shown to be sharp for $f \in (h L^2 \cap H^1)(\mathbb R)$.

\begin{Proposition}
\label{Proposition:1.3}
Let $u_0 = R f$ with $R > 0$ sufficiently large and $f \in H^1(\mathbb R)$.
Then there exists an $H^1(\mathbb R)$ solution for $u_0$
for which its lifespan is estimated by $T \geq C R^{-p+1}$
with some positive constant $C$.
\end{Proposition}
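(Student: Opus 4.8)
The plan is to prove local well-posedness in $H^1(\mathbb R)$ by a contraction argument and to track the dependence of the existence time on the size $R$ of the initial data. The starting point is the Duhamel formulation
\[
u(t) = e^{-\mathrm i t |D|} u_0 + \int_0^t e^{-\mathrm i (t-\tau)|D|} |u(\tau)|^{p-1} u(\tau)\, d\tau,
\]
together with the basic fact that $e^{-\mathrm i t |D|}$ is a unitary group on $H^s(\mathbb R)$ for every $s$, and in particular an isometry on $H^1(\mathbb R)$. Because the linear propagator is an isometry, no smoothing or Strichartz input is needed: the whole estimate is driven by the algebra structure of $H^1$ in one dimension.

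The key nonlinear estimates are, for $u,v$ in a ball of $H^1(\mathbb R)$, the bound
\[
\| |u|^{p-1} u \|_{H^1} \leq C \|u\|_{H^1}^p
\]
and the Lipschitz estimate
\[
\| |u|^{p-1} u - |v|^{p-1} v \|_{H^1} \leq C \big( \|u\|_{H^1}^{p-1} + \|v\|_{H^1}^{p-1} \big) \|u - v\|_{H^1}.
\]
Both follow from the one-dimensional Sobolev embedding $H^1(\mathbb R) \hookrightarrow L^\infty(\mathbb R)$: the $L^2$ part is controlled by $\|u\|_\infty^{p-1}\|u\|_2$, while differentiating in $x$ produces terms of the form $|u|^{p-1}\partial_x u$, bounded by $\|u\|_\infty^{p-1}\|\partial_x u\|_2$, so altogether by $\|u\|_{H^1}^p$.

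One then defines the map $\Phi$ via the right-hand side of the Duhamel formula on the complete metric space $X_{T,M} = \{ u \in C([0,T];H^1) : \|u\|_{C([0,T];H^1)} \leq M \}$. Using the isometry property of $e^{-\mathrm i t|D|}$ and the two nonlinear estimates one obtains
\[
\|\Phi(u)\|_{C([0,T];H^1)} \leq \|u_0\|_{H^1} + C T M^p, \qquad \|\Phi(u) - \Phi(v)\|_{C([0,T];H^1)} \leq C T M^{p-1}\|u-v\|_{C([0,T];H^1)}.
\]
Choosing $M = 2\|u_0\|_{H^1} = 2R\|f\|_{H^1}$ and imposing $C T M^p \leq \|u_0\|_{H^1}$ and $C T M^{p-1} \leq 1/2$, both requirements reduce to $T \leq c\, M^{-(p-1)} = c' R^{-(p-1)}$, where $c'$ depends only on $p$ and $\|f\|_{H^1}$. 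On such an interval $\Phi$ maps $X_{T,M}$ into itself and is a contraction, so its unique fixed point is the desired $H^1(\mathbb R)$ solution, and its lifespan satisfies $T \geq C R^{-p+1}$, matching the upper bound $T \leq C R^{-p+1}$ coming from \eqref{eq:1.8}.

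The delicate point, and the step I expect to require the most care, is the pair of nonlinear estimates for non-integer $p$, where $z \mapsto |z|^{p-1} z$ is only $C^1$ and its derivative degenerates at $z = 0$. One must check that $\partial_x(|u|^{p-1}u)$ is genuinely well defined in $L^2$ and that the Lipschitz estimate survives near the zero set of $u$; the elementary inequality $\big| |z|^{p-1} z - |w|^{p-1} w \big| \leq C \big( |z|^{p-1} + |w|^{p-1} \big) |z - w|$ for $p \geq 1$ handles the $L^2$ component, while the derivative component requires a fractional chain-rule of Moser type. Everything else is the routine balance between the isometric linear flow, of size $O(1)$ in $T$, and the nonlinear contribution, of size $O(TM^p)$, which is precisely what produces the sharp time scale $T \sim R^{-(p-1)}$.
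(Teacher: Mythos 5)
Your argument is correct in spirit and reaches the right time scale, but it takes a genuinely different route from the paper. The paper disposes of local well-posedness in one line (Sobolev embedding plus a standard contraction) and obtains the lifespan bound separately, from an a priori differential inequality: testing the equation with $\overline u$ and $-\Delta \overline u$ gives $\frac{d}{dt}\|u(t)\|_{H^1}^2 \leq C\|u(t)\|_{H^1}^{p+1}$, and integrating the comparison ODE $U' = CU^{(p+1)/2}$ yields $\|u(t)\|_{H^1} \leq \bigl(\|u_0\|_{H^1}^{-(p-1)} - \tfrac{C(p-1)}{2}t\bigr)^{-1/(p-1)}$, hence $T \geq C\|u_0\|_{H^1}^{-(p-1)} = C' R^{-(p-1)}$. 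You instead extract the time scale directly from the fixed-point iteration by choosing $M = 2\|u_0\|_{H^1}$ and balancing $TM^p \lesssim M$. Both approaches give the same answer; the paper's has the advantage that the quantitative step only uses the one-sided bound $\bigl|\int \nabla(|u|^{p-1}u)\cdot\overline{\nabla u}\,dx\bigr| \leq C\|u\|_{H^1}^{p+1}$ and never needs a difference estimate on the nonlinearity, whereas yours leans on the $H^1$ Lipschitz bound $\||u|^{p-1}u - |v|^{p-1}v\|_{H^1} \lesssim (\|u\|_{H^1}^{p-1}+\|v\|_{H^1}^{p-1})\|u-v\|_{H^1}$, which genuinely fails for $1 < p < 2$ because $z\mapsto |z|^{p-1}z$ has only H\"older-continuous derivative; the term $(F'(u)-F'(v))\partial_x v$ cannot be bounded linearly in $\|u-v\|_{H^1}$. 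You flag this as the delicate point, but the cure is not a Moser-type chain rule (that only gives the product estimate you already have); the standard repair is to run the contraction in the weaker metric of $C([0,T];L^2)$ on the closed $H^1$ ball of radius $M$, where the $L^2$ Lipschitz estimate $\||u|^{p-1}u-|v|^{p-1}v\|_2 \leq C(\|u\|_\infty^{p-1}+\|v\|_\infty^{p-1})\|u-v\|_2$ holds for all $p>1$ and still produces the same smallness condition $T \lesssim M^{-(p-1)} \sim R^{-(p-1)}$. With that modification your proof is complete and self-contained, and arguably more explicit than the paper's about where the $R^{-(p-1)}$ scale comes from.
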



\section{Preliminary}
In this section, we recall the blow-up solutions for an ODE
which gives the mechanism of blow-up for weighted $L^2$ norm of solutions.
We also study the condition for weight functions of Proposition \ref{Proposition:1.1}.

\subsection{Blow-up solutions for an ODE}
\begin{Lemma}
\label{Lemma:2.1}
Let $C_1, C_2 > 0$ and $q>1$. If $f \in C^1([0,T); \mathbb R)$ satisfies $f(0) > 0$ and
	\[
	f' + C_1 f = C_2 f^q \quad \mbox{on $[0,T)$ for some $T>0$},
	\]
then
	\[
	f(t)
	= e^{-C_1 t}
	\bigg( f(0)^{-(q-1)} + C_1^{-1} C_2 e^{-C_1(q-1)t}
	- C_1^{-1} C_2 \bigg)^{-\frac{1}{q-1}}.
	\]
Moreover, if $f(0) > C_1^{\frac 1 {q-1}} C_2^{- \frac 1 {q-1}}$,
then $T < - \frac 1 {C_1 (q-1)} \log ( 1 - C_1 C_2^{-1} f(0)^{-q+1} )$.
\end{Lemma}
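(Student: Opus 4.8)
The equation is a Bernoulli-type ODE, so the plan is to linearize it by the standard substitution $g = f^{-(q-1)} = f^{1-q}$. Since $f(0) > 0$ and $f$ is continuous, $f$ stays positive on an initial subinterval, and there $g$ is well defined and $C^1$. Differentiating and inserting $f' = C_2 f^q - C_1 f$ gives $g' = (1-q) f^{-q} f' = (1-q)(C_2 - C_1 f^{1-q}) = (1-q)(C_2 - C_1 g)$, that is
\[
g' - (q-1) C_1 g = -(q-1) C_2,
\]
a linear first-order equation with constant coefficients.

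Next I would solve this with the integrating factor $e^{-(q-1)C_1 t}$. Multiplying through and integrating from $0$ to $t$ yields $e^{-(q-1)C_1 t} g(t) - g(0) = -\frac{C_2}{C_1}\big(1 - e^{-(q-1)C_1 t}\big)$, whence
\[
g(t) = e^{(q-1)C_1 t}\Big( g(0) - \frac{C_2}{C_1} + \frac{C_2}{C_1} e^{-(q-1)C_1 t}\Big).
\]
Using $g(0) = f(0)^{-(q-1)}$ and reverting via $f = g^{-1/(q-1)}$ (note $(q-1)C_1/(q-1) = C_1$, which produces the outer factor $e^{-C_1 t}$) recovers exactly the asserted closed form for $f(t)$. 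The same computation shows $f$ remains positive, and hence the substitution is legitimate, precisely as long as the bracket
\[
B(t) = f(0)^{-(q-1)} + \frac{C_2}{C_1} e^{-(q-1)C_1 t} - \frac{C_2}{C_1}
\]
stays positive.

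For the lifespan bound I would analyze $B$. It is strictly decreasing, with $B(0) = f(0)^{-(q-1)} > 0$ and $\lim_{t\to\infty} B(t) = f(0)^{-(q-1)} - C_2/C_1$. The hypothesis $f(0) > C_1^{1/(q-1)} C_2^{-1/(q-1)}$ is equivalent, after raising to the power $q-1$ and taking reciprocals, to $f(0)^{-(q-1)} < C_2/C_1$, so this limit is negative and $B$ has a unique zero $T^\ast$. Solving $B(T^\ast)=0$ gives $e^{-(q-1)C_1 T^\ast} = 1 - C_1 C_2^{-1} f(0)^{-(q-1)}$, i.e.
\[
T^\ast = -\frac{1}{C_1(q-1)} \log\big( 1 - C_1 C_2^{-1} f(0)^{-q+1}\big),
\]
the stated value. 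Since $f(t)\to +\infty$ as $t \uparrow T^\ast$ while $f$ is finite (being $C^1$) on $[0,T)$, the existence interval cannot reach beyond $T^\ast$, which yields $T \leq T^\ast$.

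The computations are routine; the only points demanding care are (i) justifying that $f$ stays positive so that $g = f^{1-q}$ is genuinely $C^1$ — obtainable either from uniqueness for the ODE (the map $s \mapsto s^q$ is locally Lipschitz for $s \geq 0$, so the zero solution is the only one touching $0$, contradicting $f(0)>0$) or a posteriori from the explicit formula — and (ii) the monotonicity and sign analysis of $B(t)$, which both pins down the blow-up time and converts the lower bound on $f(0)$ into the finiteness of $T^\ast$.
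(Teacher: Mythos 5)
Your proof is correct and in substance the same as the paper's: both solve this Bernoulli equation by explicit integration, you via the substitution $g=f^{1-q}$ followed by an integrating factor, the paper via $f=e^{-C_1 t}g$ followed by separation of variables, and both land on the identical closed form. If anything, your handling of the positivity of $f$ and of the lifespan bound is more complete than the paper's proof, which stops at the integrated identity and leaves the ``moreover'' part implicit.
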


\begin{proof}
Let $f = e^{-C_1 t} g$. Then
	\[
	g' = C_2 e^{-C_1(q-1)t} g^q.
	\] Therefore,
	\[
	\frac{1}{1-q} \bigg( g^{1-q}(t) - g^{1-q}(0) \bigg)
	=
\frac{C_2}{C_1(1-q)} ( e^{-C_1(q-1)t} - 1).
	\]
\end{proof}

\subsection{Condition for weight function}
\begin{Lemma}[Coiffman - Meyer]
\label{Lemma:2.2}
Let $p \in C^\infty(\mathbb R^{2n})$ satisfy the estimates
	\[
	|D_x^\beta D_\xi^\alpha p(x,\xi) |
	\leq C_{\alpha,\beta} \langle \xi \rangle^{1-|\alpha|}
	\]
for all multi-indices $\alpha$ and $\beta$.
Then
for any Lipschitz function $h$,
	\[
	\| [p(x,D),h] f\|_2
	\leq C \| h \|_{\mathrm{Lip}} \| f\|_2.
	\]
\end{Lemma}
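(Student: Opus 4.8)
The plan is to realize the commutator as an integral operator whose kernel is one order less singular than that of $p(x,D)$, and to use the Lipschitz bound on $h$ to absorb exactly one power of $\langle\xi\rangle$. Writing $K(x,y)$ for the Schwartz kernel of $p(x,D)$, one has
$$[p(x,D),h]f(x) = \int_{\mathbb R^n} K(x,y)\bigl(h(y)-h(x)\bigr)f(y)\,dy,$$
so the commutator has kernel $\widetilde K(x,y)=(h(y)-h(x))K(x,y)$. Since $h$ is Lipschitz, $|h(y)-h(x)|\le \|h\|_{\mathrm{Lip}}|x-y|$, and the extra factor $|x-y|$ should compensate for the order-one growth of the symbol; heuristically this turns an operator of order $1$ into one of order $0$, hence bounded on $L^2$. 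The work is to make this rigorous with operator norm controlled by $\|h\|_{\mathrm{Lip}}$.

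First I would perform a Littlewood--Paley decomposition in the frequency variable, $p=\sum_{j\ge 0}p_j$ with $p_j(x,\xi)=p(x,\xi)\psi_j(\xi)$ supported in $|\xi|\sim 2^{j}$ for $j\ge 1$. From the hypothesis one checks $|D_\xi^\alpha D_x^\beta p_j|\lesssim 2^{j(1-|\alpha|)}$ on its support, so repeated integration by parts in $\xi$ gives the kernel bound
$$|K_j(x,y)|\lesssim 2^{j(n+1)}\bigl(1+2^{j}|x-y|\bigr)^{-M}$$
for every $M$. Multiplying by $|h(y)-h(x)|\le \|h\|_{\mathrm{Lip}}|x-y|$ and rescaling $z=2^{j}(x-y)$, the kernel $\widetilde K_j$ of $T_j:=[p_j(x,D),h]$ satisfies
$$\sup_x\int_{\mathbb R^n}|\widetilde K_j(x,y)|\,dy+\sup_y\int_{\mathbb R^n}|\widetilde K_j(x,y)|\,dx\lesssim \|h\|_{\mathrm{Lip}}$$
uniformly in $j$. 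By Schur's test, $\|T_j\|_{L^2\to L^2}\lesssim \|h\|_{\mathrm{Lip}}$ for each $j$.

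The main obstacle is the summation in $j$: the uniform Schur bounds are not summable, so one cannot conclude by the triangle inequality and must instead exploit almost orthogonality. Here I would use that each $p_j(x,D)=p_j(x,D)\tilde P_j$ has input frequencies localized to $|\xi|\sim 2^{j}$, where $\tilde P_j$ is a fattened Littlewood--Paley projection, in order to establish the off-diagonal decay
$$\|T_j^* T_k\|+\|T_jT_k^*\|\lesssim \|h\|_{\mathrm{Lip}}^2\,2^{-\varepsilon|j-k|}$$
for some $\varepsilon>0$; this is where the interaction between the frequency cutoffs and the merely Lipschitz function $h$ must be handled with care. Granting these estimates, the Cotlar--Stein lemma yields $\|\sum_j T_j\|_{L^2\to L^2}\lesssim \|h\|_{\mathrm{Lip}}$, which is the claim.

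An alternative to the Cotlar--Stein step is to verify directly that $\widetilde K$ is a Calder\'on--Zygmund kernel with constant $\lesssim \|h\|_{\mathrm{Lip}}$: the size bound $|\widetilde K|\lesssim \|h\|_{\mathrm{Lip}}|x-y|^{-n}$ follows from $|K(x,y)|\lesssim |x-y|^{-n-1}$, and the smoothness bound $|\nabla_y\widetilde K|\lesssim \|h\|_{\mathrm{Lip}}|x-y|^{-n-1}$ follows from $\nabla_y\widetilde K=(\nabla h)K+(h(y)-h(x))\nabla_y K$, in which the term $(\nabla h)K$ shows that Lipschitz regularity is exactly what the H\"ormander condition requires. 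One then invokes the $T(1)$ theorem after checking the weak boundedness property and that $T(1),T^*(1)\in \mathrm{BMO}$; in either route the $L^2$ bound, rather than the kernel estimates, is the genuine difficulty.
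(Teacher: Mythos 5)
The paper does not prove Lemma \ref{Lemma:2.2} at all: it is quoted as the Coifman--Meyer commutator theorem and used as a black box, so there is no internal argument to compare yours against. Judged on its own terms, your reduction is correct as far as it goes: the commutator kernel $(h(y)-h(x))K(x,y)$, the Littlewood--Paley pieces $p_j$, the kernel bound $|K_j(x,y)|\lesssim 2^{j(n+1)}(1+2^j|x-y|)^{-M}$, and the uniform single-scale estimate $\|T_j\|\lesssim \|h\|_{\mathrm{Lip}}$ by Schur's test are all standard and accurate.

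The genuine gap is the step you yourself flag: the summation over scales. The off-diagonal bound $\|T_j^*T_k\|+\|T_jT_k^*\|\lesssim \|h\|_{\mathrm{Lip}}^2 2^{-\varepsilon|j-k|}$ is asserted, not proved, and it is not obtainable from the frequency localization of $p_j(x,D)$ alone: multiplication by a merely Lipschitz $h$ does not preserve frequency support, so $T_j$ is not frequency-localized on either side, and the composition $T_j^*T_k$ reduces (to leading order) to $\tilde P_j M_g \tilde P_k$ with $g$ a bounded function containing frequencies up to $2^{\max(j,k)}$ --- an operator with no decay in $|j-k|$ in general. The standard repair is to decompose $h$ as well and organize the sum as paraproducts (using $\|\Delta_l h\|_\infty\lesssim 2^{-l}\|h\|_{\mathrm{Lip}}$ to gain decay in the high-frequency-of-$h$ regime), which is precisely the content of the Coifman--Meyer theorem; without that extra decomposition the Cotlar--Stein input is missing. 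Your alternative route has the same character: the Calder\'on--Zygmund kernel estimates you verify are correct but, as you note, do not give the $L^2$ bound; the $T(1)$ hypotheses require showing $[p(x,D),h](1)\in\mathrm{BMO}$, which for Lipschitz $h$ amounts to $p(D)h\in\mathrm{BMO}$ and again carries the real weight of the proof. In short, both of your routes terminate exactly where the theorem begins; either complete the paraproduct/Cotlar--Stein argument or, as the paper does, cite Coifman--Meyer and move on.
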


\begin{Lemma}
Let $\phi \in C_0^\infty([0,\infty); \mathbb R)$ satisfy
	\[
	\phi(\rho) =
	\begin{cases}
	1 & \mathrm{if} \quad 0 \leq \rho \leq 1,\\
	0 & \mathrm{if} \quad \rho \geq 2.
	\end{cases}
	\]
	Then
	\[
	\bigg| \int_{\mathbb R^n} \phi(|\xi|) |\xi| e^{\mathrm i x \cdot \xi} d \xi \bigg|
	\leq C \langle x \rangle^{-n-1}.
	\]
\end{Lemma}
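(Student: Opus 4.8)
The plan is to estimate the oscillatory integral
\[
I(x) = \int_{\mathbb R^n} \phi(|\xi|) |\xi| e^{\mathrm i x \cdot \xi} \, d\xi
\]
in two regimes, according to whether $|x|$ is small or large. Since the integrand is compactly supported in $\{|\xi| \leq 2\}$ and the amplitude $\phi(|\xi|)|\xi|$ is bounded, the crude bound $|I(x)| \leq \int_{|\xi| \leq 2} |\xi| \, d\xi = C$ immediately gives $|I(x)| \leq C \leq C' \langle x \rangle^{-n-1}$ on the compact set $\{|x| \leq 1\}$, where $\langle x \rangle$ is comparable to a constant. So the only real work is to gain the decay factor $|x|^{-n-1}$ in the region $|x| \geq 1$.

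For the large-$x$ regime I would integrate by parts repeatedly using the identity $e^{\mathrm i x \cdot \xi} = |x|^{-2} (-\mathrm i) \, x \cdot \nabla_\xi e^{\mathrm i x \cdot \xi}$, or equivalently exploit that $(-\Delta_\xi)^k e^{\mathrm i x \cdot \xi} = |x|^{2k} e^{\mathrm i x \cdot \xi}$. Each application of the transpose differential operator moves a derivative onto the amplitude $a(\xi) = \phi(|\xi|)|\xi|$, producing after $N$ integrations by parts the bound
\[
|I(x)| \leq C |x|^{-N} \int_{\mathbb R^n} \big| \partial^\alpha_\xi a(\xi) \big| \, d\xi,
\]
with no boundary terms since $a$ is compactly supported. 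The point is that $a$ is smooth away from the origin but only Lipschitz-type at $\xi = 0$ because of the factor $|\xi|$; its derivatives of order $k$ behave like $|\xi|^{1-k}$ near the origin, which is integrable in $n$ dimensions precisely as long as $1 - k > -n$, i.e. $k < n+1$. Taking $N = n+1$ derivatives would just fail to be locally integrable, so the main obstacle is handling this borderline singularity at $\xi = 0$ to extract exactly $n+1$ powers of $|x|^{-1}$.

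To resolve this I would split the integral with a dyadic or Littlewood--Paley decomposition adapted to $|x|$: write $I(x) = I_{\mathrm{near}}(x) + I_{\mathrm{far}}(x)$, where $I_{\mathrm{near}}$ integrates over $|\xi| \leq |x|^{-1}$ and $I_{\mathrm{far}}$ over $|x|^{-1} \leq |\xi| \leq 2$. On the near region the integrand is $O(|\xi|)$ and the volume is $O(|x|^{-n})$, yielding $|I_{\mathrm{near}}(x)| \leq C |x|^{-n-1}$ directly. On the far region one can safely integrate by parts $N = n+1$ times, because the singularity at the origin has been excised; the boundary contributions at $|\xi| = |x|^{-1}$ and the integral of $|\partial^\alpha a| \sim |\xi|^{1-(n+1)}$ over $|\xi| \geq |x|^{-1}$ both contribute terms of size $|x|^{-(n+1)}$ after balancing. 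Combining the two pieces gives $|I_{\mathrm{far}}(x)| \leq C|x|^{-n-1}$ as well.

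Finally I would assemble the pieces: for $|x| \geq 1$ the estimates on $I_{\mathrm{near}}$ and $I_{\mathrm{far}}$ give $|I(x)| \leq C|x|^{-n-1} \leq C' \langle x \rangle^{-n-1}$, while for $|x| \leq 1$ the trivial bound suffices, and since $\langle x \rangle$ and $1+|x|$ are comparable the two cases merge into the single claimed estimate $|I(x)| \leq C\langle x \rangle^{-n-1}$. The one genuinely delicate point throughout is the bookkeeping of the derivatives of $|\xi|$ near the origin, and verifying that the dyadic cutoff at scale $|x|^{-1}$ is exactly the right threshold to balance the near and far contributions; everything else is routine integration by parts and volume counting.
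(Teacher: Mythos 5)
Your overall strategy (trivial bound for $|x|\le 1$; repeated integration by parts for $|x|\ge 1$, with the singularity $|\partial_\xi^k(\phi(|\xi|)|\xi|)|\sim|\xi|^{1-k}$ at the origin correctly identified as the only obstacle) is the right one, and your near-region estimate $|I_{\mathrm{near}}(x)|\le C\int_{|\xi|\le|x|^{-1}}|\xi|\,d\xi\le C|x|^{-n-1}$ is correct. The gap is in the far region. After $n+1$ integrations by parts on $\{|x|^{-1}\le|\xi|\le 2\}$ the boundary terms at $|\xi|=|x|^{-1}$ do come out to exactly $|x|^{-n-1}$, as you say, but the bulk term does not: since $\partial_\xi^{n+1}|\xi|$ is homogeneous of degree $-n$ and not identically zero,
\[
|x|^{-(n+1)}\int_{|x|^{-1}\le|\xi|\le 2}\big|\partial_\xi^{n+1}\big(\phi(|\xi|)|\xi|\big)\big|\,d\xi
\;\sim\; |x|^{-(n+1)}\int_{|x|^{-1}}^{2}\rho^{-n}\,\rho^{n-1}\,d\rho
\;\sim\; |x|^{-(n+1)}\log(2|x|),
\]
so the two contributions do not both "balance" to $|x|^{-(n+1)}$: your argument as written yields only $\langle x\rangle^{-n-1}\log(2+|x|)$. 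The "genuinely delicate point" you flag at the end is precisely where a logarithm is lost, and taking absolute values of $\partial^{n+1}a$ over the whole far region cannot recover it.

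Two standard repairs. (i) Upgrade the two-piece split to the genuine dyadic decomposition you allude to: on each annulus $|\xi|\sim r$ with $|x|^{-1}\le r\le 1$, integrate by parts $N=n+2$ times to get a contribution $\lesssim |x|^{-N}r^{\,n+1-N}=|x|^{-(n+1)}(r|x|)^{-1}$; the sum over dyadic $r\ge|x|^{-1}$ is now a convergent geometric series, whereas with $N=n+1$ every one of the $\sim\log|x|$ annuli contributes the same $|x|^{-(n+1)}$, which is exactly the log loss. (ii) The paper's route: take $x=|x|e_1$, integrate by parts $n$ times in $\xi_1$ (legitimate globally, since $\partial_1^{n}(\phi(|\xi|)|\xi|)\sim|\xi|^{1-n}\in L^1_{\mathrm{loc}}$), and extract the final factor $|x|^{-1}$ by writing $e^{\mathrm i|x|\xi_1}=(\mathrm i|x|)^{-1}\partial_1\big(e^{\mathrm i|x|\xi_1}-1\big)$ and integrating by parts once more against this antiderivative, which vanishes at $\xi_1=0$; the bound $|e^{\mathrm i|x|\xi_1}-1|\le\min(2,|x||\xi|)$ then cancels one power of the $|\xi|^{-n}$ singularity exactly where it is dangerous. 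Either device closes your argument; without one of them the proof does not give the stated power of $\langle x\rangle$.
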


\begin{proof}
It suffices to consider the case where $|x|$ is sufficiently large.
Let $\psi \in C_0^\infty([0,\infty); \mathbb R)$ satisfy
	\[
	\psi(\rho) =
	\begin{cases}
	1 & \mathrm{if} \quad 0 \leq \rho \leq 1,\\
	0 & \mathrm{if} \quad \rho \geq 2.
	\end{cases}
	\]
Let $e_1 = (1,0,\cdots,0)$.
Let $\xi_1 = \xi \cdot e_1$ and $\xi' = \xi - \xi_1 e_1$.
Assume $x = |x| e_1$.
Then
	\[
	\int_{\mathbb R^n} \phi(|\xi|) |\xi| e^{\mathrm i x \cdot \xi} d \xi
	= \int_{\mathbb R^n} \phi(|\xi|) |\xi| e^{\mathrm i |x| \xi_1} d \xi.
	\]
By integrating by parts $k$ times,
	\begin{align*}
	\int_{\mathbb R^n} \phi(|\xi|) |\xi| e^{\mathrm i |x| \xi_1} d \xi
	&= (- \mathrm i|x|)^{-k} \!
	\int_{\mathbb R^n} \partial_1^k (\phi(|\xi|) |\xi|) e^{ \mathrm i |x| \xi_1} d \xi\\
	&= (-\mathrm i|x|)^{-k} \!
	\int_{\mathbb R^n} \partial_1^k(|\xi|) \phi(|\xi|) e^{\mathrm i |x| \xi_1}
	+ R_k(\xi) d \xi,
	\end{align*}
where $R_{n+1} \in L^1(\mathbb R^n)$.
Here $\partial_1^k |\xi|$ is estimated by $C |\xi|^{1-k}$.
Moreover,
	\begin{align*}
	&\bigg| \int_{\mathbb R^{n}}
	\partial_1^n(|\xi|) \phi(|\xi|) e^{ \mathrm i |x| \xi_1} d \xi \bigg|\\
	&= |x|^{-1} \bigg| \int_{\mathbb R^{n-1}} \int_{\mathbb R}
	\partial_1 \{ \partial_1^n(|\xi|) \phi(|\xi|) \} ( e^{\mathrm i |x| \xi_1} - 1)
	d \xi_1 d \xi'
	\bigg|\\
	&\leq C \bigg| \int_{\mathbb R^n} \psi(|x| |\xi|)
	|\xi|^{1-n} |\phi(|\xi|)| + |\partial_1 \phi(|\xi|)| \} d \xi \bigg|\\
	&+ C |x|^{-1} \bigg| \int_{\mathbb R^n}
	(1-\psi(|x| |\xi|))
	|\xi|^{-n} \{ |\phi(|\xi|)| + |\partial_1 \phi(|\xi|)| \} d \xi \bigg|
	\end{align*}
since $|e^{\mathrm i |x| \xi_1} -1 | \leq |x| |\xi|$.
The first integral is estimated by
	\[
	C \int_0^{2 |x|^{-1}} |\phi(\rho)| + |\phi'(\rho)| d
\rho
	\leq C \| \phi \|_{C^1(0,2)} |x|^{-1}.
	\] By letting $\Psi(\rho) = \int_0^\rho |1 - \psi(\rho')| d \rho'$ and integrating by parts once again, the
second integral is estimated by
	\[
	C |x|^{-2} \int_{|x|^{-1}}^2 \rho^{-2}
	\| \phi \|_{C^2(0,2)} \Psi(|x|\rho) d \rho
	\leq C \| \phi \|_{C^2(0,2)}
|x|^{-1}.
	\] This proves the lemma.
\end{proof}

\begin{Lemma}
\label{Lemma:2.4}
Let $h$ be a Lipschitz function on $\mathbb R^n$ satisfying the estimate
	\[
	\bigg\| \frac 1 {h(\cdot)}
	\int_{\mathbb R^n} \langle \cdot - y \rangle^{-n-1} h(y) f(y) dy \bigg\|_2
	\leq C \|f\|_2
	\]
for any $f \in L^2$.
Then $\frac{1}{h} [D,h]$ is a bounded operator from $L^2$ to $L^2$.
\end{Lemma}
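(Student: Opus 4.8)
The plan is to split the Fourier multiplier $|D|$ into its low- and high-frequency parts by means of the cut-off $\phi$ from the preceding lemma, and to treat the two pieces by entirely different mechanisms. Writing $|D| = A + B$ with $A = \phi(|D|)\,|D|$ and $B = (1-\phi(|D|))\,|D|$, I would note two facts. First, $A$ is the convolution operator $Af = K * f$ whose kernel $K(x) = c_n\int_{\mathbb R^n}\phi(|\xi|)\,|\xi|\,e^{\mathrm i x\cdot\xi}\,d\xi$ satisfies $|K(x)|\le C\langle x\rangle^{-n-1}$ by the preceding lemma. Second, the symbol $(1-\phi(|\xi|))|\xi|$ of $B$ is smooth on all of $\mathbb R^n$ (the singularity of $|\xi|$ at the origin has been cut off) and satisfies $|\partial_\xi^\alpha[(1-\phi(|\xi|))|\xi|]|\le C_\alpha\langle\xi\rangle^{1-|\alpha|}$. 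Accordingly $\frac 1 h[D,h] = \frac 1 h[A,h] + \frac 1 h[B,h]$, and I would estimate each summand separately.

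For the high-frequency piece, the symbol of $B$ meets the hypotheses of Lemma~\ref{Lemma:2.2} (there is no $x$-dependence, so the bounds on $D_x^\beta$ are trivial for $|\beta|\ge 1$), and the Coifman--Meyer commutator estimate gives $\|[B,h]f\|_2\le C\|h\|_{\mathrm{Lip}}\|f\|_2$. Multiplying by $\frac 1 h$ and using $\frac 1 h\in L^\infty$ then yields $\|\frac 1 h[B,h]f\|_2\le \|\frac 1 h\|_\infty\,\|[B,h]f\|_2\le C\|f\|_2$.

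For the low-frequency piece I would pass to the explicit kernel. Since $A$ is convolution with $K$, the commutator has the representation $[A,h]f(x)=\int_{\mathbb R^n}K(x-y)(h(y)-h(x))f(y)\,dy$, so that $\frac{1}{h(x)}[A,h]f(x)=\frac{1}{h(x)}\int_{\mathbb R^n}K(x-y)h(y)f(y)\,dy - (Af)(x)$. The second term is harmless: $\langle x\rangle^{-n-1}$ is integrable, hence $K\in L^1$ and $\|Af\|_2\le\|K\|_1\|f\|_2$ by Young's inequality. For the first term, using $|K(x-y)|\le C\langle x-y\rangle^{-n-1}$ together with the positivity of the weight $h$, I would dominate its modulus pointwise by $\frac{C}{h(x)}\int_{\mathbb R^n}\langle x-y\rangle^{-n-1}h(y)|f(y)|\,dy$ and then apply the assumed estimate of the lemma to $|f|$, obtaining an $L^2$ bound by $C\||f|\|_2=C\|f\|_2$. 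Adding the two contributions gives the claim.

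The main obstacle is exactly the low-frequency term, which is where the integral hypothesis is indispensable: one cannot simply split the full $[D,h]$ and subtract $Df$, because $D$ is of order one and $Df$ is not controlled in $L^2$. It is precisely the frequency localization that isolates a genuine convolution operator whose kernel enjoys the integrable bound $\langle\cdot\rangle^{-n-1}$, matching exactly the kernel appearing in the assumed estimate. The remaining work is routine: justifying the kernel representation of $[A,h]$ and the interchange of integration, and recording the use of $\frac 1 h\in L^\infty$ and of the positivity of $h$.
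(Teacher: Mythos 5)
Your proof is correct and takes essentially the same route as the paper: the same low/high frequency splitting by $\phi$, Coifman--Meyer (Lemma~\ref{Lemma:2.2}) together with $\frac1h\in L^\infty$ for the high-frequency commutator, and the kernel bound $|\mathfrak F^{-1}(|\cdot|\phi)|\le C\langle x\rangle^{-n-1}$ combined with the integral hypothesis for the low-frequency part. If anything you are slightly more explicit than the paper, which states its low-frequency estimate only for $\frac1h\phi(D)(Dhf)$ and leaves the harmless term $\phi(D)Df$ (your $Af$, bounded by Young's inequality) unmentioned.
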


\begin{proof}
Let $\phi$ be a smooth function on $[0,\infty)$
satisfying that $\phi(\xi) = 1$ if $|\xi| \leq 1$ and $\phi(\xi) = 0$ if $|\xi| \geq 2$.
Let $\phi(D) f = \mathfrak F^{-1} \phi \hat f$.
We divide the proof into the following two estimate:
$\| \frac{1}{h} \phi(D) (D h f)\|_2 \leq \|f\|_2 $
and $\| \frac{1}{h} [ (1-\phi(D))D, h] f \|_2 \leq \|f\|_2 $.

At first,
	\[
	\| \frac{1}{h} \phi(D) (D h f)\|_2
	\leq C \bigg\| \frac 1 {h(\cdot)}
	\int_{\mathbb R^n} \langle \cdot - y \rangle^{-n-1} h(y) f(y) dy \bigg\|_2
	\leq C \|f\|_2,
	\]
since
	\[
	| \mathfrak F^{-1} ( |\cdot| \phi )| \leq C \langle x \rangle^{-n-1}.
	\]

Secondly, $(1-\phi(|\xi|))|\xi|$ satisfies the condition of Lemma \ref{Lemma:2.2}.
So the second estimate follows from Lemma \ref{Lemma:2.2}.
\end{proof}

\begin{Remark}
$h(x) = \langle x \rangle$ satisfies the condition of Lemma \ref{Lemma:2.4}.
Actually $h$ is Lipshitz and by using triangle inequality,
	\begin{align*}
	&\bigg\| \langle x \rangle^{-1}
	\int_{\mathbb R^n} \langle x-y \rangle^{-n-1} \langle y \rangle f (y) dy
	\bigg\|_2\\
	&\leq \bigg\|
	\int_{\mathbb R^n} \langle x-y \rangle^{-n-1} f (y) dy
	\bigg\|_2
	+ \bigg\| \langle x \rangle^{-1}
	\int_{\mathbb R^n} \langle x-y \rangle^{-n} f (y) dy
	\bigg\|_2\\
	&\leq ( \| \langle \cdot \rangle^{-n-1} \|_1
	+ \| \langle \cdot \rangle^{-1} \|_q
	\| \langle \cdot \rangle^{-n} \|_{q'} )
	\| f\|_2,
	\end{align*}
where $n < q < \infty$.
\end{Remark}

\begin{Corollary}
\label{Corollary:2.5}
Let $h$ satisfy the condition of Lemma \ref{Lemma:2.4} and let $h_R$ be $h_R = h(\cdot/R)$.
Then
	\[
	\| \frac 1 h_R [D, h_R] \|
	\leq R^{-1} \|\frac{1}{h} [D,h] \|.
	\]
\end{Corollary}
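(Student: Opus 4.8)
The plan is to realise the scaling $h \mapsto h_R$ as a unitary conjugation on $L^2$ and to track how each factor of $\frac{1}{h_R}[D,h_R]$ transforms. Introduce the dilation operator $U_R$ defined by $(U_R f)(x) = R^{-n/2} f(x/R)$, which is unitary on $L^2(\mathbb R^n)$, and write $M_g$ for the operator of multiplication by a function $g$. First I would record the two elementary intertwining identities. Conjugating a multiplication operator by a dilation merely rescales the multiplier, so $U_R^{-1} M_{h_R} U_R = M_h$ and $U_R^{-1} M_{1/h_R} U_R = M_{1/h}$. The only genuinely computational point is the behaviour of $|D|$: since $|D|$ is the Fourier multiplier with symbol $|\xi|$, homogeneous of degree one, a change of variables on the Fourier side gives $U_R^{-1}|D|U_R = R^{-1}|D|$. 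I would verify this on the Schwartz class, where all expressions are classical, and use that $U_R$ preserves this core.

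Next I would combine these identities. Writing $[D,h_R] = |D| M_{h_R} - M_{h_R}|D|$ and inserting $U_R U_R^{-1}$ between consecutive factors, so that conjugation by $U_R$ distributes over products, one obtains
\begin{align*}
U_R^{-1} \Big( \frac{1}{h_R}[D,h_R] \Big) U_R
&= (U_R^{-1} M_{1/h_R} U_R)(U_R^{-1}|D|U_R)(U_R^{-1} M_{h_R} U_R) \\
&\quad - (U_R^{-1} M_{1/h_R} U_R)(U_R^{-1} M_{h_R} U_R)(U_R^{-1}|D|U_R) \\
&= R^{-1}\, \frac{1}{h}[D,h].
\end{align*}
Since $U_R$ is unitary, the operator norm is invariant under conjugation by $U_R$, whence $\|\frac{1}{h_R}[D,h_R]\| = R^{-1}\|\frac{1}{h}[D,h]\|$, which is in fact slightly stronger than the claimed inequality. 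The right-hand side is finite by Lemma \ref{Lemma:2.4}, so the same computation simultaneously shows that $\frac{1}{h_R}[D,h_R]$ is a bounded operator.

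The only point requiring care is the manipulation of the unbounded operator $|D|$: the conjugation identities and the factorisation should first be established on a dense invariant core such as the Schwartz functions, on which the commutator and all the products act classically, and then extended to $L^2$ by density, using the boundedness granted by Lemma \ref{Lemma:2.4}. Once the degree-one homogeneity of $|D|$ has been correctly translated into the factor $R^{-1}$, the remainder of the argument is purely algebraic.
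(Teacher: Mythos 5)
Your proposal is correct and is essentially the paper's argument in a cleaner package: the paper performs the same dilation by an explicit change of variables in the oscillatory-integral kernel of $[D,h_R]$, which is exactly your unitary conjugation by $U_R$ combined with the degree-one homogeneity of $|\xi|$. Your formulation additionally makes transparent that the bound is in fact an equality of operator norms, which the paper's final inequality slightly obscures.
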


\begin{proof}
	\begin{align*}
	\frac 1 {h_R(x)} [D, h_R] f(x)
	&= \frac{1}{h(\frac x R)} \int_{\mathbb R^n} e^{\mathrm i (x-y) \cdot \xi}
	|\xi| \{ h(\frac y R) - h(\frac x R) \}
	f(y) d \xi dy\\
	&= R^n \frac{1}{h (\frac x R)} \int_{\mathbb R^n} e^{\mathrm i (\frac x R -y) \cdot R \xi}
	|\xi| \{ h(y) - h(\frac x R) \} f(Ry) d \xi dy\\
&= R^{-1} \frac{1}{h (\frac x R)} \int_{\mathbb R^n} e^{\mathrm i (\frac x R -y) \cdot \xi}
	|\xi| \{ h(y) - h(\frac x R) \} f(Ry) d \xi dy\\
	&= R^{-1}
\frac{1}{h(\frac x R)} [D,h] f_{R^{-1}}(\frac x R).
	\end{align*}
This implies
	\[
	\| \frac 1 {h_R} [D, h_R] f \|_2
	= R^{-1+n/2} \| \frac{1}{h}
[D,h] f_{R^{-1}} \|_2
	\leq R^{-1} \|\frac{1}{h} [D,h] \| \| f \|_2.
	\]
\end{proof}

\section{Proof}
\subsection{Proof of Proposition \ref{Proposition:1.1}}
Let $u(t,x) = h(x) v(t,x)$.
Then
	\begin{align}
	\mathrm i \partial_t v + D v + \frac{1}{h} [D,h] v
	= \mathrm i h^{p-1} |v|^{p-1} v.
	\label{eq:3.1}
	\end{align}
Multiplying both hand sides of \eqref{eq:3.1} by $\overline v$,
integrating over $\mathbb R^n$,
and taking the imaginary part of the resulting integrals,
we obtain
	\begin{align*}
	\frac 1 2 \frac d {dt} \|v(t)\|_{2}^2
	&= \int_{\mathbb R^n} h(x)^{p-1} |v(t,x)|^{p+1} dx
	- \mathrm{Im} \int_{\mathbb R^n}
	\overline{v(t,x)} \frac{1}{h(x)} [D,h] v(t,x) dx\\
	&\geq \int_{\mathbb R^n} h(x)^{p-1} |v(t,x)|^{p+1} dx
	- \|\frac{1}{h}[D,h]\| \| v(t) \|_{2}^2\\
	&\geq \| \frac 1 h \|_{2}^{-p+1} \| v(t) \|_{2}^{p+1}
	- \|\frac{1}{h}[D,h]\| \| v(t) \|_{2}^2,
	\end{align*}
where we used the following estimate:
	\[
	\| v(t) \|_{2} \leq \| \frac 1 {h^{\frac{p-1}{p+1}} } \|_{\frac{2(p+1)}{p-1}}
	\| h^{\frac{p-1}{p+1}} v(t) \|_{p+1}.
	\]
Then \eqref{eq:1.7} follows from Lemma \ref{Lemma:2.1} with $q=(p+1)/2$.

\subsection{Proof of Corollary \ref{Corollary:1.2}}
Let $h_R (x) = \langle x / R \rangle$ with $R > 0$.
Then $\frac 1 {h_R} u_0 \to u_0$ in $L^2$ as $R \to \infty$.
Moreover, $\| \frac 1 {h_R} [ D, h_R] \| \sim R^{-1}$,
and $\| \frac 1 {h_R} \|_{2} \sim R^{n/2}$.
Therefore
	\[
	\mbox{RHS } \eqref{eq:1.6} \sim R^{\frac n 2 - \frac 1 {p-1}} \to 0
	\]
as
$R \to \infty$ if $ p < 1 + \frac 2 n$.
It means that for any $u_0 \in L^2(\mathbb R^n) \backslash \{0\}$,
there exists $R_0$ such that \eqref{eq:1.6} is satisfied
with $h(x) = \langle x / R_0 \rangle$.

\subsection{Proof of Proposition \ref{Proposition:1.3}}
The local well-posedness in $H^1(\mathbb R)$ is easily obtained
by the Sobolev embedding and standard contraction argument.
By multiplying \eqref{eq:1.4} by $\overline u$ and $(-\Delta) \overline u$,
integrating over $\mathbb R$,
we obtain
	\begin{align*}
	\frac d {dt} \| u(t)\|_2^2
	&= \|u(t)\|_{p+1}^{p+1}
	\leq C \|u(t)\|_{H^1(\mathbb R)}^{p+1},\\
	\frac d {dt} \| \nabla u(t)\|_2^2
	&= \mathrm{Re}
	\int_{\mathbb R} \nabla (|u(t,x)|^{p-1} u(t,x)) \cdot \overline{\nabla u(t,x)} dx
	\leq C \|u(t)\|_{H^1(\mathbb R)}^{p+1},
	\end{align*}
where $\|f\|_{H^1(\mathbb R)}^2 = \| f \|_2^2 + \| \nabla f \|_2^2$.
By solving the following ordinary differential equality:
	\[
	\frac d {dt} U(t) = C U(t)^{\frac {p+1} 2},
	\]
we get
	\[
	\|u(t)\|_{H^1(\mathbb R)}
	\leq \bigg( \|u_0\|_{H^1(\mathbb R)}^{- (p-1)} - \frac {C(p-1)} 2 t
	\bigg)^{- \frac 1 {p-1}}.
	\]
This proves the Proposition \ref{Proposition:1.3}.




\end{document}